\newtheorem{lemma}{Lemma}
\newtheorem{corollary}{Corollary}
\newtheorem{proposition}{Proposition}
\theoremstyle{conjecture}
\theoremstyle{definition}
\theoremstyle{question}
\theoremstyle{questions}
\theoremstyle{remark}
\theoremstyle{remarks}
\theoremstyle{example}
\newtheorem*{example}{Examples}
\numberwithin{equation}{section}
\begin{document}
\title{Sobolev Maps into Compact Lie Groups and Curvature}

\author{Andres Larrain-Hubach}

\author{Doug Pickrell}
\email{pickrell@math.arizona.edu}

\begin{abstract} These are notes on seminal work of Freed, and subsequent developments, on the curvature properties
of (Sobolev Lie) groups of maps from a Riemannian manifold into a compact Lie group. We are mainly interested in critical cases
which are relevant to quantum field theory. For example Freed showed that, in a necessarily qualified sense, the quotient space $W^{1/2}(S^1,K)/K$ is a (positive constant) Einstein `manifold' with respect to the
essentially unique $PSU(1,1)$ invariant metric, where $W^{s}$ denotes maps of $L^2$ Sobolev order $s$. In a similarly qualified sense, and in addition making use of the Dixmier trace/Wodzicki residue, we show that for a Riemann surface $\Sigma$, $W^1(\Sigma,K)/K$ is a (positive constant) Einstein `manifold' with respect to the essentially unique conformally invariant metric. As in the one dimensional case, invariance implies Einstein, but the sign of the Ricci curvature has to be computed. Because of the qualifications involved in these statements, in practice it is necessary to consider curvature for $W^s(\Sigma,K)$ for $s$ above the critical exponent, and limits. The formula we obtain is surprisingly simple.

\end{abstract}

\maketitle

\section{Introduction}

In these notes (following many other authors) we consider curvature for left invariant Riemannian metrics on mapping groups (and some related homogeneous spaces). Throughout we suppose that $\Sigma$ is a compact Riemannian manifold and $K$ is a compact Lie group. The examples of basic interest are the following:

\bigskip

Example I. If $s>dim(\Sigma)/2$, then $\mathbf K=W^s(\Sigma;K)$, the space of continuous maps which are smooth of order $s$ in the $L^2$ Sobolev sense, is a Hilbert Lie group (see \cite{EM}). Fix a Riemannian metric on the Lie algebra, $W^s(\Sigma,\mathfrak k)$,
$$ \langle x,y\rangle:= \int_{\Sigma} \langle\langle P^{s}x,y\rangle\rangle dV$$
where $\langle\langle \cdot,\cdot\rangle\rangle$ denotes an $Ad$ invariant inner product on $\mathfrak k$, $P$ is a positive Laplace
type operator (e.g. $P=\Delta+m_0^2$, where $\Delta$ is the Laplace operator), and $dV$ is the volume element, corresponding to the Riemannian metric on $\Sigma$. This defines a complete left invariant Riemannian metric on $\mathbf K$ (If $P=\Delta+m_0^2$ and $m_0=0$, then we think of this as a complete left invariant Riemannian metric on the quotient $\mathbf K/K$).

\bigskip

Example II. As in Example I, suppose $s>dim(\Sigma)/2$. Given a finite configuration of points $V\subset \Sigma$, there is an evaluation map
$$W^s(\Sigma;K)\to \prod_V K_v:g \to (g_v)$$
and we can consider the left invariant Riemannian structure on the image for which the map is a Riemannian submersion.
This is a statistical mechanical analogue of Example I. The inner product on the Lie algebra $\prod_V\mathfrak k_v$ is given by
$$\langle x,y\rangle=\sum_V  G^{-1}_V(v,w)\langle\langle x(v),y(w)\rangle\rangle$$
where in this context $G^{-1}_V$ denotes the inverse to the matrix $(G(v,w))_{v,w\in V}$ and $G$ is the Green's function for $P^s$ restricted to $V$, i.e.
$$(P^{-s}f)(v)=\int_{\Sigma}G(v,w)f(w)dV(w), \qquad f\in W^{-s}(\Sigma)$$

\bigskip

Example III. When $s=dim(\Sigma)/2$, the critical exponent, $W^s(\Sigma;K)$ (which now $g\in W^s(\Sigma,K)$ must be interpreted as an equivalence class of maps - a generic class no longer has a continuous representative) is a topological group and (at least in the case $\Sigma=S^1$, conjecturally in general) a topological Hilbert manifold, but it is not in general a Lie group (when $\mathfrak k$ is nonabelian, $W^s(\Sigma,\mathfrak k)$ is not a Lie algebra: the commutator fails to be closed, because for critical $s$, a generic $W^s$ map $x:\Sigma \to\mathfrak k$ is not bounded). Nonetheless one can heuristically calculate curvature for the quotient $W^{dim(\Sigma)/2}(\Sigma;K)/K$, when it is equipped with the natural translation and conformally invariant metric (For a surface the inner product at the basepoint is given as in Example I, with $P=\Delta$; in general $P^{dim(\Sigma)/2}$ is replaced by a conformally invariant (Paneitz or critical GJMS) operator; see \cite{Morpurgo}).

\bigskip

As we will explain next, from a naive point of view, we are primarily interested in Example III (and especially for $\Sigma$ a circle or a surface). However, in reality we need to consider the Examples I and II and limits.

\subsection{Motivation (from Quantum Field Theory)}\

1. For maps from one Riemannian manifold to another, the natural (local) energy functional is the Dirichlet functional; in our context
$$\mathcal E:W^1(\Sigma,K)\to\mathbb R:g \to \frac12\int_{\Sigma}\langle\langle g^{-1}dg\wedge *g^{-1}dg\rangle \rangle$$
For this energy functional the critical dimension is $dim(\Sigma)=2$, in which case $\mathcal E$ is conformally invariant (i.e. depends only on the conformal structure induced by the metric on the domain).
For a surface $\Sigma$ and a simply connected target $K$, a fundamental unsolved problem of science is to correctly define
the associated quantum theory, which naively amounts to making sense of the (un-normalized) Feynman measure
$$exp(-\frac1{t}\mathcal E(g))\prod_{v\in\Sigma}|dg_v|$$
This sigma model is reputed to be asymptotically free - and hence is expected to exist - but the mathematical elucidation of this
is incomplete. A potentially reasonable way to regularize the normalized version of this Feynman expression
is to consider the (mathematically well-defined) heat kernel measures for the groups $W^s(\Sigma,K)$ for $s>1$ (see e.g. \cite{Pi3}). It is well-known that the behavior of a heat kernel on a Riemannian manifold is intimately related to the Ricci curvature of the metric. For loop groups this is effectively exploited by Driver and Gordina in \cite{DG}, to prove that loop group heat kernel measures are translation quasi-invariant; the key is to show that there is a uniform lower bound for Ricci curvature for the groups in II above, associated to a circle. In the critical conformally invariant case of a surface, the expectation is that the heat kernel measures for $W^s(\Sigma,K)$, $s>1$, marginally
fail to be translation quasi-invariant. Renormalization group arguments (which predict asymptotic freedom) suggest that ultimately one has to understand a limit as $s\downarrow 1$ (for the un-normalized measures) when the coupling parameter $t=ln(s)$ (see e.g. lecture 3 of \cite{Gawedzki}).\

2. For the Yang-Mills functional the natural domain consists of $W^1$ connections:
$$YM:W^1\Omega^1(\Sigma;\mathfrak k)\to \mathbb R:A\to \frac12\int_{\Sigma}\langle\langle F_A\wedge * F_A\rangle\rangle$$
For this energy functional the critical dimension is $dim(\Sigma)=4$, in which case $\mathcal YM$ is conformally invariant.
Unsurprisingly, the relevant gauge group is $W^s(\Sigma,K)$, where $s=dim(\Sigma)/2=2$ is the critical exponent.
It is again potentially important to understand curvature (To our knowledge
the original observation along these lines is due to I.M. Singer. He observed that because curvature increases along submersions, the space of gauge equivalence classes is non-negatively curved, and he suggested this might shed light on existence of a mass gap).\

For our purposes, the upshot of this physics digression is that, to focus on spheres for simplicity, there is a great deal of interest in understanding the geometry of the critical sequence (of, in a qualified sense, Riemannian submersions)
$$... \to W^{d/2}(S^d,K)/K \to ... \to W^{1}(S^2,K)/K \to W^{1/2}(S^1,K)/K $$
(where these spaces are equipped with the natural conformally invariant metrics), and the zeta type approximations to these spaces for $s>dim/2$, the critical exponent (for more general spaces in place of spheres, less symmetric metrics arise).

These notes are largely motivated by the following observation: in the special case that $\Sigma$ is a sphere with the standard conformal structure, by general symmetry considerations the space $W^{dim(\Sigma)/2}(\Sigma;K)/K$ has to be (in a heuristic sense) an Einstein manifold, provided that one can make sense of the Ricci curvature in some reasonable symmetry preserving way. As observed by Freed, Ricci curvature does not make sense in this context in the naive sense, but he found a very reasonable way to regularize Ricci curvature in the one dimensional case. In a qualified sense, he observed that $W^{1/2}(S^1,K)/K$ is (positive constant) Kahler-Einstein.

\subsection{What We Aim For}\

1. For the groups $\mathbf K=W^s(\Sigma;K)$ and metrics as in Example I, we first observe that the curvature operator, $z\to R(x,y)z$, is a pseudo-differential (psd) operator of order $\le -2$ (this is a technical improvement on previous results).

2. Ricci curvature $Ric(y,z)$ is the trace of the operator $x\to R(x,y)z$, provided that one can make sense of the trace. Freed observed that the order of this operator
is $-1$, and consequently the trace is never defined in the naive sense for the groups in Example I. However Freed also observed that if one first takes the trace over $\mathfrak k$, then one obtains a scalar psd operator of order $\le -2$, and consequently, if $\Sigma$ is one dimensional (and not otherwise), one can take a trace of this scalar psd operator to obtain a reasonable regularization for Ricci curvature. This procedure has been refined and fully justified by many subsequent developments (see especially \cite{Driver} and \cite{Inahama}). In the critical two dimensional case, it seems at least a priori reasonable to replace the ordinary trace of the scalar psd operator by the Wodzicki residue/Dixmier trace, to define Ricci curvature in this case.

3. Suppose that $\Sigma=S^1$ and $K$ is simply connected.  For $s=1/2$ the quotient $W^{1/2}(S^1,K)/K$ is Kahler-Einstein with
positive Ricci curvature. For $s=1$ the Ricci curvature for $W^1(S^1,K)/K$ is negative definite, but not Einstein. For $s\ne 1$ we have observed numerically that for sufficiently small mass, the Ricci curvature for $W^s(S^1,K)$ is positive definite, but not bounded below by a positive constant. We have not managed to prove this rigorously, and we hope to fill this gap at a later time.

4. Suppose that $\Sigma$ is a Riemannian surface. Using the Wodzicki residue/Dixmier trace to define Ricci curvature, we find that
the Ricci curvature for $W^s(\Sigma,K)$ equals
$$Ric(y,z)=\pi s^2\int_{\Sigma}-\kappa( dy\wedge *dz)$$
where $\kappa$ denotes the Killing form (so that ($-\kappa$ is positive on $\mathfrak k$). In particular, according to our interpretation
of Ricci curvature, the curvature is positive for all $s$, and for $s=1$, $W^1(\Sigma,K)/K$ is Einstein with positive constant, with respect to the conformally invariant metric.

5. It is an important question to determine whether there is a uniform lower bound for Ricci curvature for configurations of compact groups on a Riemannian surface, for fixed $s>1$. This seems unlikely, but again we have failed to find a proof of this.

\subsection{Plan of the Paper}\

In Section \ref{curvop} we consider the curvature operator, $z\to R(x,y)z$. We first consider a general Lie group with a left invariant metric. We secondly specialize to the case when the metric can be written in terms of a biinvariant background. In this context our calculations apply equally to Sobolev maps into compact groups and to configurations of compact groups. We then specialize to mapping groups.

In Section \ref{riccurv} we consider the operator $x\to R(x,y)z$. We are ultimately interested in taking the trace of this operator, in some sense, which is some version of Ricci curvature. We again begin by considering a general Lie group with a left invariant metric.
We then specialize to the case when the metric can be written in terms of a biinvariant background. In this context our calculations
apply equally to Sobolev maps into compact groups and to configurations of compact groups. We then specialize to mapping groups. When the domain is a surface, we define Ricci curvature using the Dixmier trace/Wodzicki trace. This is a severe regularization. In particular we will see that the resulting regularized Ricci curvature does not depend on the mass term $m_0$.

\subsection{Notation}

Throughout these notes $\Sigma$ denotes a compact Riemannian manifold (we will often specialize to the cases of a circle or surface), $\Delta=d^*d$ is the nonnegative Laplacian for functions on $\Sigma$, where $d$ is the exterior derivative, and so on.
$K$ denotes a compact Lie group with Lie algebra $\mathfrak k$, $\langle\langle\cdot,\cdot\rangle\rangle$ denotes a fixed $Ad$ invariant inner product on $\mathfrak k$, and $\kappa$ denotes the Killing form on $\mathfrak k$. $W^s(\Sigma)$ denotes the topological vector space of equivalence classes of real functions which are smooth order $s$ in the $L^2$ Sobolev sense. For the topology of nonlinear spaces of Sobolev maps, $W^s(\Sigma;X)$, see \cite{Brezis} (and a book in preparation).

\section{Left Invariant Metrics and the Curvature Operator}\label{curvop}

Let $\mathbf K$ denote a Lie group. Fix an inner product $\langle\cdot,\cdot\rangle$ on the Lie algebra $Lie(\mathbf K)$,
which we identify with the corresponding left invariant metric on $\mathbf K$. For left invariant vector fields $x,y$ on $\mathbf K$,
the Levi-Civita connection is given by
$$\nabla_x(y)=\frac12([x,y]-ad_x^*(y)-ad_y^*(x))$$
where $ad_x^*$ denotes the adjoint of $ad_x$ with respect to the inner product,
i.e.
$$\langle\nabla_x(y),z\rangle=\frac12(\langle[x,y],z\rangle-\langle [x,z],y\rangle-\langle [y,z],x\rangle)$$

Our convention for the Riemann curvature is
$$R(x,y)=[\nabla_x,\nabla_y]-\nabla_{[x,y]}$$

More explicitly $R(x,y)z$ equals
\begin{equation}\label{curvature1}\frac 14([x,[y,z]]-ad_x\circ ad_y^*(z)-ad_x\circ ad_z^*(y)-ad_x^*([y,z])+ad_x^*\circ ad_y^*(z)+ad_x^*\circ ad_z^*(y)\end{equation}
$$-ad_{([y,z]-ad_y^*(z)-ad_z^*(y))}^*(x))$$
$$-\frac 14([y,[x,z]]-ad_y\circ ad_x^*(z)-ad_y\circ ad_z^*(x)-ad_y^*([x,z])+ad_y^*\circ ad_x^*(z)+ad_y^*\circ ad_z^*(x))$$
$$-ad_{([x,z]-ad_x^*(z)-ad_z^*(x))}^*(y))-\frac12([[x,y],z]-ad_{[x,y]}^*(z)-ad_z^*([x,y]))$$\\
or equivalently, by organizing parts of this in terms of commutators,
\begin{equation}\label{curvoperator2}\frac 14(-ad_{[x,y]}(z)-ad_{[y,z]}^*(x)+ad_{[x,z]}^*(y))\end{equation}
$$+\frac 14 (-[ad_x,ad_y^*](z)+[ad_y,ad_x^*](z)+[ad_x^*,ad_y^*](z))$$
$$+\frac 14(-ad_x\circ ad_z^*(y)+ad_x^*\circ ad_z^*(y))-\frac 14(-ad_y\circ ad_z^*(x)+ad_y^*\circ ad_z^*(x))$$
$$+\frac 14(ad_{ad_y^*(z)}^*(x)+ad_{ad_z^*(y)}^*(x))-\frac 14(ad_{ad_x^*(z)}^*(y)+ad_{ad_z^*(x)}^*(y))$$
$$-\frac12(-ad_{[x,y]}^*(z)-ad_z^*([x,y]))$$

\begin{example} As a check on conventions: Suppose that $\mathbf K$ is a compact Lie group with Ad-invariant inner product. Then
$\nabla_x=\frac12 ad_x$, $R(x,y)=-\frac14 ad_{[x,y]}$, (unnormalized) sectional curvature $\langle R(x,y)y,x\rangle=\frac14|[x,y]|^2$
$$Ric(y,z)=\mathrm{trace}(x\to R(x,y)z)=\sum \langle R(x_i,y)z,x_i\rangle$$
where $\{x_i\}$ is an orthonormal basis; this equals $-2\dot g\kappa(y,z)$ when the Lie algebra is simple and $\dot g$ denotes the dual Coxeter number.

\end{example}

\subsection{The Curvature Operator}

In this subsection we assume that the inner product is of the form
$$\langle x,y\rangle=\langle\langle G^{-1} x,y\rangle\rangle$$
where $\langle\langle \cdot,\cdot\rangle\rangle$ is an $Ad$ invariant inner product (To explain the notation, in later sections $G$ will be a Green's function, and $G^{-1}$ will be a positive pseudodifferential operator). In this case the adjoint of $ad_x$ is given by
$$ad_x^*=-G\circ ad_x \circ G^{-1}$$
This is equivalent to
$$ad_x^*(y)=G\circ ad_{G^{-1}y}(x)$$

With this assumption $R(x\wedge y)z$ equals
$$\frac 14 (-ad_{[x,y]}(z)-[ad_x,ad_y^*](z)+[ad_y,ad_x^*](z)+[ad_x^*,ad_y^*](z))$$
$$+\frac 14(-G\circ ad_{G^{-1}x}\circ ad_y(z)+G\circ ad_{G^{-1}y}\circ ad_x(z))$$
$$+\frac 14(-ad_x\circ G\circ ad_{G^{-1}y}(z)-G\circ ad_x\circ ad_{G^{-1}y}(z))$$
$$-\frac 14(-ad_y\circ G\circ ad_{G^{-1}x}(z)-G\circ ad_y\circ ad_{G^{-1}x}(z))$$
$$+\frac 14(-G\circ ad_{G^{-1}x}\circ G\circ ad_y\circ G^{-1}(z) +G \circ ad_{G^{-1}x}\circ G \circ ad_{G^{-1}y}(z))$$
$$-\frac 14(-G\circ ad_{G^{-1}y}\circ G\circ ad_x\circ G^{-1}(z) +G \circ ad_{G^{-1}y}\circ G \circ ad_{G^{-1}x}(z))$$
$$-\frac12(-ad_{[x,y]}^*(z)-G\circ ad_{G^{-1}[x,y]}(z))$$

Now we gather terms that involve $ad_x$ and $ad_y$ and their adjoints:
$$=\frac 14 (-ad_{[x,y]}(z)-[ad_x,ad_y^*](z)-[ad_x^*,ad_y](z)+[ad_x^*,ad_y^*](z)+2ad_{[x,y]}^*(z))$$
$$+\frac 14(-G\circ ad_{G^{-1}x}\circ ad_y(z)+ad_y\circ G\circ ad_{G^{-1}x}(z)-G\circ ad_{G^{-1}x}\circ G\circ ad_y\circ G^{-1}(z)$$
$$+G\circ ad_y\circ ad_{G^{-1}x}(z))$$
$$+\frac 14(+G\circ ad_{G^{-1}y}\circ ad_x(z)-ad_x\circ G\circ ad_{G^{-1}y}(z)+G\circ ad_{G^{-1}y}\circ G\circ ad_x\circ G^{-1}(z)$$
$$-G\circ ad_x\circ ad_{G^{-1}y}(z))$$
$$+\frac 14( +G \circ ad_{G^{-1}x}\circ G \circ ad_{G^{-1}y}(z))-G \circ ad_{G^{-1}y}\circ G \circ ad_{G^{-1}x}(z))$$
$$+\frac12 G\circ ad_{G^{-1}[x,y]}(z)$$

Now we condense the expression slightly using commutators:
$$=\frac 14 (-ad_{[x,y]}(z)+[ad_x,G\circ ad_y\circ G^{-1}](z)+[G\circ ad_x\circ G^{-1},ad_y](z)$$
$$+[G\circ ad_x\circ G^{-1},G\circ ad_y\circ G^{-1}](z)-2G\circ ad_{[x,y]}\circ G^{-1}(z))$$
$$+\frac 14(-G\circ ad_{[G^{-1}x,y]}+ad_y\circ G\circ ad_{G^{-1}x}(z)-G\circ ad_{G^{-1}x}\circ G\circ ad_y\circ G^{-1}(z)$$
$$+\frac 14(+G\circ ad_{[G^{-1}y,x]}-ad_x\circ G\circ ad_{G^{-1}y}(z)+G\circ ad_{G^{-1}y}\circ G\circ ad_x\circ G^{-1}(z)$$
$$+\frac 14 [G \circ ad_{G^{-1}x}, G \circ ad_{G^{-1}y}](z)$$
$$+\frac12 G\circ ad_{G^{-1}[x,y]}(z)$$

\begin{equation}\label{curvop1}=\frac 14 (-ad_{[x,y]}(z)+[ad_x,G\circ ad_y\circ G^{-1}](z)+[G\circ ad_x\circ G^{-1},ad_y](z)-G\circ ad_{[x,y]}\circ G^{-1}(z))\end{equation}
$$+\frac 14(-G\circ ad_{[G^{-1}x,y]}+ad_y\circ G\circ ad_{G^{-1}x}(z)-G\circ ad_{G^{-1}x}\circ G\circ ad_y\circ G^{-1}(z)$$
$$+\frac 14(+G\circ ad_{[G^{-1}y,x]}-ad_x\circ G\circ ad_{G^{-1}y}(z)+G\circ ad_{G^{-1}y}\circ G\circ ad_x\circ G^{-1}(z)$$
$$+\frac 14 [G \circ ad_{G^{-1}x}, G \circ ad_{G^{-1}y}](z)$$
$$+\frac12 G\circ ad_{G^{-1}[x,y]}(z)$$

It is possible to express the 2nd and 3rd lines using commutators, but there does not seem to be any advantages to do this.

\subsection{The Example $\mathbf K=W^s(\Sigma;K)$}\

\begin{proposition}\label{prop1}  Suppose that $s\ge 1$. Then the psd order of $z\to R(x,y)z$ is $\le -2$.

\end{proposition}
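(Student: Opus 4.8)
The plan is to read off the pseudodifferential (psd) order of each term in the explicit formula \eqref{curvop1} for $R(x,y)z$, regarded as an operator in the variable $z$ with $x,y$ fixed. The three ingredients I would use are: for fixed $x$ the operator $ad_x$ is multiplication by the matrix-valued function $ad(x)$, hence of order $0$; $G=P^{-s}$ and $G^{-1}=P^{s}$ are psd operators of orders $-2s$ and $+2s$; and, since $P=\Delta+m_0^2$ acts diagonally on $\mathfrak{k}$-valued functions, the full symbols of $G$ and $G^{-1}$ are scalar (a function of the symbol of $P$ times the identity on $\mathfrak{k}$), so the pointwise brackets $[ad(v),\sigma(G^{\pm1})]$ vanish identically. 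Orders are additive under composition, and the commutator of a multiplication operator with a psd operator of order $r$ drops to order $\le r-1$; these are the only facts needed.

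First I would dispose of the second through fifth lines of \eqref{curvop1}. Inspecting them, every term carries a net excess of at least one factor $G$ over factors $G^{-1}$ (the symbols $ad_{G^{-1}x}$, $ad_{[G^{-1}x,y]}$, $ad_{G^{-1}[x,y]}$ being multiplication operators of order $0$), so each has order at most $-2s$. This is $\le-2$ exactly when $s\ge1$, and this is the unique point at which the hypothesis is used.

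It then remains to handle the first line, $-ad_{[x,y]}+[ad_x,G\,ad_y\,G^{-1}]+[G\,ad_x\,G^{-1},ad_y]-G\,ad_{[x,y]}\,G^{-1}$, each of whose terms is a priori of order $0$. Writing $G\,ad_v\,G^{-1}=ad_v+R_v$ with $R_v:=G\,[ad_v,G^{-1}]$ of order $\le-1$, and invoking $[ad_x,ad_y]=ad_{[x,y]}$, the order-$0$ parts cancel identically and the first line collapses to $[ad_x,R_y]+[R_x,ad_y]-R_{[x,y]}$, which is a priori of order $\le-1$.

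The crux is to prove that this remainder is actually of order $\le-2$, i.e. that its $(-1)$-order principal symbol vanishes. Here I would compute the leading symbol of $R_v=G\,[ad_v,G^{-1}]$: because the symbol of $G^{-1}$ is scalar the bracket $[ad_v,G^{-1}]$ has no zeroth term, and its leading symbol is $-\tfrac1i\sum_j\partial_{\xi_j}\sigma(G^{-1})\,\partial_{x_j}ad(v)$, so that $\sigma_{-1}(R_v)=\sum_j q_j\,ad(\partial_{x_j}v)$ with $q_j:=-\tfrac1i\,\partial_{\xi_j}\sigma(G^{-1})/\sigma(G^{-1})$ a universal scalar not depending on $v$. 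Feeding this into the three remainder terms and using $[ad(u),ad(w)]=ad([u,w])$ together with the Leibniz rule $\partial_{x_j}[x,y]=[\partial_{x_j}x,y]+[x,\partial_{x_j}y]$, the contributions cancel term by term and the symbol is $0$. I expect this subprincipal cancellation to be the main obstacle, since it forces one to go one order beyond the leading symbol; a secondary technical point is that the coefficients $ad(G^{-1}x)=ad(P^sx)$ are genuinely non-smooth near the critical exponent, so interpreting the relevant factors as honest psd operators requires care, but as these rough terms occur only in the second through fifth lines, where they merely lower the order, the order bound is unaffected.
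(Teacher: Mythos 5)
Your proof is correct, but the key step --- the first line of (\ref{curvop1}), the only part not already of order $\le -2s$ --- is handled by a genuinely different route than the paper's. The paper works with factored elements $x=X\otimes a$, $y=Y\otimes b$ and derives the exact operator identity
\begin{equation*}
-ad_{[x,y]}+[ad_x,G\circ ad_y\circ G^{-1}]+[G\circ ad_x\circ G^{-1},ad_y]-G\circ ad_{[x,y]}\circ G^{-1}
=[ad_x,G][ad_y,G^{-1}]-[ad_y,G][ad_x,G^{-1}],
\end{equation*}
from which the bound $\le -2$ is immediate: each factor is a commutator of a multiplication operator with a classical psd operator and so loses one order, with no symbol computation required. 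Your reduction of the first line to $[ad_x,R_y]+[R_x,ad_y]-R_{[x,y]}$ with $R_v=G\circ[ad_v,G^{-1}]$ is in fact algebraically equivalent to that identity (expand $R_{[x,y]}=G[[ad_x,ad_y],G^{-1}]$ by Jacobi; the terms $G[ad_x,[ad_y,G^{-1}]]$ and $G[ad_y,[ad_x,G^{-1}]]$ cancel against the corresponding pieces of $[ad_x,R_y]$ and $[R_x,ad_y]$, leaving exactly the paper's right-hand side). You instead stop at the a priori bound $\le -1$ and then verify that the order $-1$ symbol vanishes, using $\sigma_{-1}(R_v)=\sum_j q_j\,ad(\partial_{x_j}v)$ with $v$-independent scalars $q_j$, the derivation property $[ad(u),ad(w)]=ad([u,w])$, and Leibniz; that cancellation is real and your argument is complete. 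What the paper's route buys is the closed form for the first line, which it reuses to condense the curvature operator in the following Proposition and again in the Ricci computation; what your route buys is a conceptual explanation (the remainder is a Lie-algebra cocycle-type expression whose leading symbol is killed by Leibniz) at the cost of a symbol calculation one order below principal. Your side remark about the roughness of the coefficients $ad_{G^{-1}x}$ in the remaining lines is a fair caveat that the paper also glosses over; as you say, it does not affect the order count since those terms carry a net factor of $G$.
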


This generalizes the result of the first author in \cite{LH}.

\begin{proof} In the expression (\ref{curvop1}) for the curvature operator, it is clear that all of the terms have order $\le -2s$,
with the exception of those appearing in the first line. Thus we focus on the terms in the first line.

Suppose initially that $x=X\otimes a$ and $y=Y\otimes b$, where $X$ is a function,$a\in \mathfrak k$,... .
To simplify the notation, identify $X$ with the corresponding multiplication operator, $a$ with $ad_a$, and so on. Then
the first line of (\ref{curvop1})
$$-ad_{[x,y]}+[ad_x,G\circ ad_y\circ G^{-1}]+[G\circ ad_x\circ G^{-1},ad_y]-G\circ ad_{[x,y]}\circ G^{-1}$$
$$=-X\circ Y\circ [a,b]+X\circ G\circ Y\circ G^{-1} \circ ab-G\circ Y\circ G^{-1} \circ X\circ ba+G\circ X\circ G^{-1} \circ Y ab-Y\circ
G\circ X\circ G^{-1} ba-G\circ X\circ Y\circ G^{-1} [a,b]$$
$$=(-X\circ Y+X\circ G\circ Y\circ G^{-1}+G\circ X\circ G^{-1}\circ Y-G\circ X\circ Y\circ G^{-1})ab$$
$$-(-X\circ Y+Y\circ G\circ X\circ G^{-1}+G\circ Y\circ G^{-1}\circ X-G\circ X\circ Y\circ G^{-1})ba$$
$$=(-X\circ Y+X\circ G\circ (G^{-1}\circ Y+[Y,G^{-1}])+G\circ X\circ G^{-1}\circ Y-G\circ X\circ (G^{-1}\circ Y+[Y,G^{-1}]))ab+(...)ba$$
$$=(X\circ G\circ [Y,G^{-1}])-G\circ X\circ [Y,G^{-1}])ab+(...)ba$$
$$=[X,G]\circ [Y,G^{-1}] ab-[Y,G]\circ [X,G^{-1}]ba$$
$$=[ad_x,G]\circ [ad_y,G^{-1}]-[ad_y,G]\circ [ad_x,G^{-1}]$$
$$=G\circ [G^{-1},ad_x]\circ G\circ [ad_y,G^{-1}]-G\circ [G^{-1},ad_y]\circ G\circ [ad_x,G^{-1}]$$
\begin{equation}\label{line1}=[G\circ [G^{-1},ad_y], G\circ [G^{-1},ad_x]]\end{equation}
This can also be written as
\begin{equation}\label{line2}[G,ad_x][G^{-1},ad_y]-[G,ad_y][G^{-1}ad_x]\end{equation}
Both the expressions (\ref{line1}) and (\ref{line2}) show clearly that the first line of (\ref{curvop1}) has order $\le -2$.

A general $x$ can be written as a linear combination of the factored elements $X\otimes a$. Since the operator we are considering
depends on $x$ and $y$ in a bilinear way, this completes the proof.
\end{proof}

Using the calculation in the proceeding proof, we can somewhat condense the expression for the curvature operator.
These calculations are valid also for the statistical mechanical case $\prod_V K_v$, because we can replace
functions on $\Sigma$ by functions on $V$.

\begin{proposition} For either $W^s(\Sigma,K)$, $s>dim(\Sigma)/2$, or $\prod_V K_v$, $R(x,y)$ equals
$$\frac14([G\circ ([G^{-1},ad_x]-ad_{G^{-1}(x)}),G\circ ([G^{-1},ad_y]-ad_{G^{-1}(y)}]-2[G\circ ad_x\circ G^{-1},G\circ ad_{G^{-1}(y)}]$$
$$
+2[G\circ ad_y\circ G^{-1},G\circ ad_{G^{-1}(x)}]+2G\circ ad_{G^{-1}([x,y])})$$
\end{proposition}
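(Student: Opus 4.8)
The plan is to start from the expression (\ref{curvop1}) for $R(x,y)$, substitute for its first line the simplified form (\ref{line1}) obtained in the proof of Proposition \ref{prop1}, and then condense the remaining four lines. Two of these are already in the desired shape: the fourth line of (\ref{curvop1}) is exactly the commutator $[G\circ ad_{G^{-1}(x)}, G\circ ad_{G^{-1}(y)}]$, and the fifth line, after clearing the overall factor of $\tfrac14$, supplies the residual term $2\,G\circ ad_{G^{-1}([x,y])}$ (note $\tfrac12=\tfrac14\cdot 2$). So the real work is to reorganize the first, second, and third lines into the master commutator $[G\circ([G^{-1},ad_x]-ad_{G^{-1}(x)}),\,G\circ([G^{-1},ad_y]-ad_{G^{-1}(y)})]$ together with the two cross terms $-2[G\circ ad_x\circ G^{-1},\,G\circ ad_{G^{-1}(y)}]$ and $+2[G\circ ad_y\circ G^{-1},\,G\circ ad_{G^{-1}(x)}]$.

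First I would expand the target master commutator bilinearly into four pieces, according to whether each slot contributes $G\circ[G^{-1},ad_\cdot]$ or $G\circ ad_{G^{-1}(\cdot)}$. The piece in which both slots contribute $G\circ[G^{-1},ad_\cdot]$ is, up to orientation, precisely (\ref{line1}), i.e. the already simplified first line; the piece in which both slots contribute $G\circ ad_{G^{-1}(\cdot)}$ is the fourth line. This leaves the two mixed pieces, which I would match against the second and third lines of (\ref{curvop1}) once the latter are put in a comparable form. To do this I would (i) rewrite every adjoint using $ad_\cdot^*=-G\circ ad_\cdot\circ G^{-1}$, (ii) convert the bracket terms $ad_{[G^{-1}(x),y]}$ via $ad_{[u,v]}=[ad_u,ad_v]$ into operator commutators $[ad_{G^{-1}(x)},ad_y]$, and (iii) use the identity $[ad_x,G]=G\circ[G^{-1},ad_x]\circ G$ established in the proof of Proposition \ref{prop1} to trade single-$G$ products for the commutator building blocks. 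The two cross commutators are then exactly what remains after the mixed pieces of the master commutator are subtracted.

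The main obstacle is purely the bookkeeping. After the substitutions, the second and third lines produce on the order of a dozen triple products of $G^{\pm1}$, $ad_x$, $ad_y$, $ad_{G^{-1}(x)}$ and $ad_{G^{-1}(y)}$, and the content of the proposition is that all the intermediate-order terms — those carrying a single unpaired factor of $G$ or $G^{-1}$ — cancel except for the one surviving copy of $2\,G\circ ad_{G^{-1}([x,y])}$; this cancellation is the same mechanism that forced the order drop in Proposition \ref{prop1}, now merely organized differently. The two places where I expect to need the most care are the orientation (overall sign) of the leading commutator built from $G\circ[G^{-1},ad_x]$, since (\ref{line1}) is recorded with the $y$-slot written first, and the precise coefficients $\pm 2$ of the cross terms. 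Throughout, the antisymmetry $R(x,y)=-R(y,x)$, which each of the four grouped terms manifestly respects, serves as a running consistency check, and it suffices to carry out the computation for factored elements $x=X\otimes a$, $y=Y\otimes b$ as in the proof of Proposition \ref{prop1}, extending to general $x,y$ by bilinearity.
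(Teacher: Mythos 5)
Your strategy is the one the paper itself (implicitly) uses --- the paper gives no written proof, only the remark that the preceding computation condenses the expression --- and most of your matching is correct: line four of (\ref{curvop1}) is the $[G\circ ad_{G^{-1}(x)},G\circ ad_{G^{-1}(y)}]$ piece, line five is the residual term, and lines two and three do reduce exactly to the two mixed pieces of the master commutator together with the cross terms $-2[G\circ ad_x\circ G^{-1},G\circ ad_{G^{-1}(y)}]$ and $+2[G\circ ad_y\circ G^{-1},G\circ ad_{G^{-1}(x)}]$. However, the point you defer as ``orientation to be checked'' is not bookkeeping; it is where the argument breaks. Write $A_x=G\circ[G^{-1},ad_x]=ad_x-G\circ ad_x\circ G^{-1}$ and $B_x=G\circ ad_{G^{-1}(x)}$. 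Equation (\ref{line1}) says the first line of (\ref{curvop1}) equals $[A_y,A_x]=-[A_x,A_y]$, whereas the master commutator $[A_x-B_x,A_y-B_y]$ contributes $+[A_x,A_y]$. The difference, $2[A_x,A_y]$, is a genuinely nonzero operator (of order $\le -2$), and it cannot be absorbed by reorienting the bracket: the four double brackets occurring in $4R(x,y)$ carry the sign pattern $(-,-,-,+)$ on $([A_x,A_y],[A_x,B_y],[B_x,A_y],[B_x,B_y])$, whose product is $-1$, while any single commutator $[\pm A_x\pm B_x,\pm A_y\pm B_y]$ yields a pattern whose product is $+1$.

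Concretely, expanding from $\nabla_x=ad_x-\tfrac12(A_x+B_x)$ gives
$$4R(x,y)=-[A_x,A_y]-[A_x,B_y]-[B_x,A_y]+[B_x,B_y]-2[G\circ ad_x\circ G^{-1},B_y]+2[G\circ ad_y\circ G^{-1},B_x]+2G\circ ad_{G^{-1}([x,y])},$$
which agrees with (\ref{curvop1}) together with (\ref{line1}) but differs from the Proposition as printed by $-2[A_x,A_y]$. So the identity you are asked to prove is false as stated; the correct condensation must either keep the $A$--$A$ block with the orientation of (\ref{line1}) outside the master commutator, or append the correction term $-\tfrac12[G\circ[G^{-1},ad_x],G\circ[G^{-1},ad_y]]$. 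Note also that your running consistency check, antisymmetry in $x\leftrightarrow y$, cannot detect this, since both the true and the stated expressions are antisymmetric. You should carry out the sign check you flagged, observe the mismatch, and either prove the corrected identity or report the error in the statement.
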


\section{Ricci Curvature I: In General}\label{riccurv}

We now consider the question of how to calculate Ricci curvature for $\mathbf K$, which assuming it makes sense, is given by
$$Ric(y,z)=trace(x\to R(x,y)(z))$$ We postpone the question of how to define the trace - we will have to do this on a case by case basis.

Suppose initially that we do not make any assumptions about the form of the inner product on the Lie algebra of $\mathbf K$.
Using the expression (\ref{curvature1}) for curvature,
as an operator on $x$, $R(x,y)z$ equals
$$\frac 14(ad_{ad_y^*(z)}(x)+ad_{ad_z^*(y)}(x)-ad_x^*([y,z])+ad_x^*\circ ad_y^*(z)+ad_x^*\circ ad_z^*(y)$$
$$-ad_{([y,z]-ad_y^*(z)-ad_z^*(y))}^*(x))$$
$$-\frac 14(-ad_y\circ ad_x^*(z)-ad_y\circ ad_z^*(x)-ad_y^*([x,z])+ad_y^*\circ ad_x^*(z)+ad_y^*\circ ad_z^*(x))$$
$$-ad_{([x,z]-ad_x^*(z)-ad_z^*(x))}^*(y))$$
$$-\frac 14 ad_z\circ ad_y(x)-\frac12(-ad_{[x,y]}^*(z)-ad_z^*([x,y]))$$
It is not clear that this can be simplified in any useful way.

As in the previous subsection, suppose that the inner product is of the form
$$\langle x,y\rangle=\langle\langle G^{-1} x,y\rangle\rangle$$
where $\langle\langle \cdot,\cdot\rangle\rangle$ is an $Ad$ invariant inner product.

Assuming this, $R(x,y)z$ equals
$$\frac 14(ad_{ad_y^*(z)}(x)+ad_{ad_z^*(y)}(x)-G\circ ad_{G^{-1}[y,z]}(x)+G\circ ad_{G^{-1} ad_y^*(z)}(x)+
G\circ ad_{G^{-1} ad_z^*(y)}(x)$$
$$-ad_{([y,z]-ad_y^*(z)-ad_z^*(y))}^*(x))$$
$$-\frac 14(-ad_y\circ G\circ ad_{G^{-1}z}(x)-ad_y\circ ad_z^*(x)+ad_y^*\circ ad_z(x)+ad_y^* \circ G\circ ad_{G^{-1}z}(x)+ad_y^*\circ ad_z^*(x)$$
$$+G \circ ad_{G^{-1}y}\circ ad_z(x)+G \circ ad_{G^{-1}y}\circ G\circ ad_{G^{-1}z}(x)+G \circ ad_{G^{-1}y}\circ ad_z^*(x))$$
$$-\frac 14 ad_z\circ ad_y(x)-\frac12(G\circ ad_{G^{-1}z}\circ ad_y(x)+ad_z^*\circ ad_y(x))$$

We expand further
$$=\frac 14(ad_{ad_y^*(z)}(x)+ad_{ad_z^*(y)}(x)-G\circ ad_{G^{-1}[y,z]}(x)+G\circ ad_{G^{-1} ad_y^*(z)}(x)+
G\circ ad_{G^{-1} ad_z^*(y)}(x)$$
$$-ad_{([y,z]-ad_y^*(z)-ad_z^*(y))}^*(x))$$
$$-\frac 14(-ad_y\circ G\circ ad_{G^{-1}z}(x)+ad_y\circ G\circ ad_z\circ G^{-1}(x)-G\circ ad_y\circ G^{-1}\circ ad_z(x)$$
$$-G\circ ad_y\circ ad_{G^{-1}z}(x)+G\circ ad_y\circ ad_z\circ G^{-1}(x))$$
$$+G \circ ad_{G^{-1}y}\circ ad_z(x)+G \circ ad_{G^{-1}y}\circ G\circ ad_{G^{-1}z}(x)+G \circ ad_{G^{-1}y}\circ ad_z^*(x)))$$
$$-\frac14 ad_z\circ ad_y(x)-\frac 12(G\circ ad_{G^{-1}z}\circ ad_y(x)+ad_z^*\circ ad_y(x))$$
Now reorder some of the terms before we try to find commutators:
$$=\frac 14(ad_{ad_y^*(z)}(x)+ad_{ad_z^*(y)}(x)-G\circ ad_{G^{-1}[y,z]}(x)+G\circ ad_{G^{-1} ad_y^*(z)}(x)+
G\circ ad_{G^{-1} ad_z^*(y)}(x)$$
$$-ad_{([y,z]-ad_y^*(z)-ad_z^*(y))}^*(x))$$
$$-\frac 14(-ad_y\circ G\circ ad_{G^{-1}z}(x)-G\circ ad_y\circ ad_{G^{-1}z}(x)+2G\circ ad_{G^{-1}z}\circ ad_y(x)$$
$$+ad_y\circ G\circ ad_z\circ G^{-1}(x)-G\circ ad_y\circ G^{-1}\circ ad_z(x)$$
$$+ad_z\circ ad_y(x)+G\circ ad_y\circ ad_z\circ G^{-1}(x)-2G\circ ad_z\circ G^{-1} \circ ad_y(x)$$
$$+G \circ ad_{G^{-1}y}\circ ad_z(x)+G \circ ad_{G^{-1}y}\circ G\circ ad_{G^{-1}z}(x)-G \circ ad_{G^{-1}y}\circ G\circ ad_z\circ G^{-1}(x))$$
(Now we look for commutators)
$$=\frac 14(ad_{ad_y^*(z)}(x)+ad_{ad_z^*(y)}(x)-G\circ ad_{G^{-1}[y,z]}(x)+G\circ ad_{G^{-1} ad_y^*(z)}(x)+
G\circ ad_{G^{-1} ad_z^*(y)}(x)$$
$$-ad_{([y,z]-ad_y^*(z)-ad_z^*(y))}^*(x))$$
$$-\frac 14(-[ad_y, G\circ ad_{G^{-1}z}](x)-G\circ [ad_y, ad_{G^{-1}z}](x)$$
$$+[ad_y, G]\circ ad_z\circ G^{-1}(x)+G\circ ad_y\circ [ad_z,G^{-1}](x)$$
$$+G\circ [G^{-1},ad_z]\circ ad_y(x)+G\circ [ad_y,ad_z]\circ G^{-1}+ G\circ ad_z \circ [ad_y, G^{-1}]$$
$$+G \circ ad_{G^{-1}y}\circ [ad_z\circ G^{-1},G](x)+G \circ ad_{G^{-1}y}\circ G\circ ad_{G^{-1}z}(x))$$

The key lines are those involving just $ad_y$ and $ad_z$. We simplify things one step at a time:
$$=\frac 14(ad_{ad_y^*(z)}(x)+ad_{ad_z^*(y)}(x)-G\circ ad_{G^{-1}[y,z]}(x)+G\circ ad_{G^{-1} ad_y^*(z)}(x)+
G\circ ad_{G^{-1} ad_z^*(y)}(x)$$
$$-ad_{([y,z]-ad_y^*(z)-ad_z^*(y))}^*(x))$$
$$-\frac 14(-[ad_y, G\circ ad_{G^{-1}z}](x)-G\circ [ad_y, ad_{G^{-1}z}](x)$$
$$+[ad_y, G]\circ ad_z\circ G^{-1}(x)+G\circ [ad_y, [ad_z,G^{-1}]](x)$$
$$G\circ [ad_y,ad_z]\circ G^{-1}+ G\circ ad_z \circ [ad_y, G^{-1}]$$
$$+G \circ ad_{G^{-1}y}\circ [ad_z\circ G^{-1},G](x)+G \circ ad_{G^{-1}y}\circ G\circ ad_{G^{-1}z}(x))$$

$$=\frac 14(ad_{ad_y^*(z)}(x)+ad_{ad_z^*(y)}(x)-G\circ ad_{G^{-1}[y,z]}(x)+G\circ ad_{G^{-1} ad_y^*(z)}(x)+
G\circ ad_{G^{-1} ad_z^*(y)}(x)$$
$$-ad_{([y,z]-ad_y^*(z)-ad_z^*(y))}^*(x))$$
$$-\frac 14(-[ad_y, G\circ ad_{G^{-1}z}](x)-G\circ [ad_y, ad_{G^{-1}z}](x)$$
$$+[ad_y, G\circ ad_z\circ G^{-1}]+G\circ [ad_y, [ad_z,G^{-1}]](x)$$
$$+G \circ ad_{G^{-1}y}\circ [ad_z\circ G^{-1},G](x)+G \circ ad_{G^{-1}y}\circ G\circ ad_{G^{-1}z}(x))$$

\begin{lemma}\label{lemma1} Suppose that the inner product is of the form
$$\langle x,y\rangle=\langle\langle G^{-1} x,y\rangle\rangle$$
where $\langle\langle \cdot,\cdot\rangle\rangle$ is an $Ad$ invariant inner product. Then as an operator on $x$, $R(x,y)z$ equals
$$=\frac 14(ad_{ad_y^*(z)}(x)+ad_{ad_z^*(y)}(x)-G\circ ad_{G^{-1}[y,z]}(x)+G\circ ad_{G^{-1} ad_y^*(z)}(x)+
G\circ ad_{G^{-1} ad_z^*(y)}(x)$$
$$-ad_{([y,z]-ad_y^*(z)-ad_z^*(y))}^*(x))$$
$$-\frac 14(-[ad_y, G\circ ad_{G^{-1}z}](x)-G\circ [ad_y, ad_{G^{-1}z}](x)$$
$$+[ad_y, G\circ ad_z\circ G^{-1}]+G\circ [ad_y, [ad_z,G^{-1}]](x)$$
$$+G \circ ad_{G^{-1}y}\circ [ad_z\circ G^{-1},G](x)+G \circ ad_{G^{-1}y}\circ G\circ ad_{G^{-1}z}(x))$$
\end{lemma}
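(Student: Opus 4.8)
The plan is to obtain the Lemma as the endpoint of a purely algebraic rearrangement of the curvature formula \eqref{curvature1}, read now as an operator in the first argument $x$, after imposing the adjoint relation $ad_x^*=-G\circ ad_x\circ G^{-1}$, equivalently $ad_x^*(w)=G\circ ad_{G^{-1}w}(x)$. No analysis enters at this stage: everything is bilinear bookkeeping in the operators $ad$, $G$ and $G^{-1}$, and the only structural inputs are the Jacobi identity and the Leibniz rule $[A,BC]=[A,B]C+B[A,C]$ for commutators.

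First I would rewrite \eqref{curvature1} so that $x$ appears as cleanly as possible, \emph{without} yet invoking the adjoint relation. The three pure bracket terms $\tfrac14[x,[y,z]]-\tfrac14[y,[x,z]]-\tfrac12[[x,y],z]$ collapse, by the Jacobi identity $[x,[y,z]]-[y,[x,z]]=[[x,y],z]$, to $-\tfrac14[[x,y],z]=-\tfrac14\,ad_z\circ ad_y(x)$, while the two terms $-ad_x\circ ad_y^*(z)$ and $-ad_x\circ ad_z^*(y)$ become $ad_{ad_y^*(z)}(x)$ and $ad_{ad_z^*(y)}(x)$ via $-ad_x(w)=ad_w(x)$. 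The result is the ``operator on $x$'' expression displayed just before the Lemma, still carrying the adjoints $ad_x^*$, $ad_y^*$, $ad_z^*$.

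Next I would substitute the adjoint formula, treating the two $\tfrac14$--blocks differently. In the first block I would apply $ad_x^*(w)=G\circ ad_{G^{-1}w}(x)$ only to the outermost adjoint that has $x$ as argument, turning $ad_x^*([y,z])$ into $G\circ ad_{G^{-1}[y,z]}(x)$ and $ad_x^*\circ ad_y^*(z)$ into $G\circ ad_{G^{-1}ad_y^*(z)}(x)$, and so on, while leaving the inner $ad_y^*(z)$, $ad_z^*(y)$ and the term $ad^*_{([y,z]-ad_y^*(z)-ad_z^*(y))}(x)$ in adjoint form; these do not participate in the subsequent cancellations, so there is nothing to gain by expanding them. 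In the second block I would instead fully expand every adjoint through $ad_x^*=-G\circ ad_x\circ G^{-1}$, producing the long intermediate sum of monomials in $ad_y,ad_z,G,G^{-1}$ recorded in the excerpt.

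The decisive step is the regrouping of this second block into commutators. Reordering the monomials and applying Leibniz, I would first bring the sum into the ``half commutator'' form shown in the excerpt, involving $[ad_y,G\circ ad_{G^{-1}z}]$, $G\circ[ad_y,ad_{G^{-1}z}]$, $[ad_y,G]\circ ad_z\circ G^{-1}$, $G\circ ad_y\circ[ad_z,G^{-1}]$, $G\circ[G^{-1},ad_z]\circ ad_y$, $G\circ[ad_y,ad_z]\circ G^{-1}$, $G\circ ad_z\circ[ad_y,G^{-1}]$, and the two $G\circ ad_{G^{-1}y}\circ(\cdots)$ terms. The final contraction then combines $[ad_y,G]\circ ad_z\circ G^{-1}+G\circ[ad_y,ad_z]\circ G^{-1}+G\circ ad_z\circ[ad_y,G^{-1}]$ into $[ad_y,G\circ ad_z\circ G^{-1}]$ by two uses of Leibniz, and combines $G\circ ad_y\circ[ad_z,G^{-1}]+G\circ[G^{-1},ad_z]\circ ad_y$ into the double commutator $G\circ[ad_y,[ad_z,G^{-1}]]$. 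I expect the genuine obstacle to be precisely this bookkeeping: the monomials must be produced with the correct multiplicities — for instance the coefficient $2$ on $G\circ ad_{G^{-1}z}\circ ad_y$ is exactly what feeds both $-[ad_y,G\circ ad_{G^{-1}z}]$ and $-G\circ[ad_y,ad_{G^{-1}z}]$ — and the antisymmetry $[A,B]=-[B,A]$ must be tracked carefully when assembling the double commutator. Once these commutators are in place and the surviving term $G\circ ad_{G^{-1}y}\circ G\circ ad_{G^{-1}z}(x)$ is carried along unchanged, the expression coincides with the stated form, proving the Lemma.
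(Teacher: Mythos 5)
Your proposal is correct and follows essentially the same route as the paper: starting from the general curvature formula \eqref{curvature1}, collapsing the pure bracket terms via the Jacobi identity to $-\tfrac14\,ad_z\circ ad_y(x)$, substituting $ad_x^*(w)=G\circ ad_{G^{-1}w}(x)$ selectively in the first block and fully in the second, and then regrouping the second block into the stated commutators by repeated use of the Leibniz rule (including the two contractions you single out, which are exactly the ones the paper performs). The only difference is one of presentation -- the paper writes out every intermediate monomial sum explicitly rather than describing the bookkeeping -- so there is no substantive gap.
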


\subsection{Mapping Groups}\

Consider the group $W^s(\Sigma;K)$. In the first part of this subsection we suppose that $G^{-1}=P^s$
where $P$ is a positive Laplace type operator.

\subsubsection{Order of Operators}\

To think about the order of $x\to R(x,y)z$ as a psd operator, we will write the operator in Lemma \ref{lemma1} in another way, and
we will also work on the first two lines: $R(x,y)z$ equals

$$\frac 14(ad_{ad_y^*(z)}(x)-G\circ ad_{ad_y^*(z)}\circ G^{-1}(x)+ad_{ad_z^*(y)}(x)-G\circ ad_{ad_z^*(y)}\circ G^{-1}(x)$$
$$-G\circ ad_{G^{-1}[y,z]}(x)+G\circ ad_{G^{-1} ad_y^*(z)}(x)+G\circ ad_{G^{-1} ad_z^*(y)}(x)$$
$$+G\circ ad_{[y,z]}\circ G^{-1}(x))$$
$$-\frac 14(-[ad_y, G\circ ad_{G^{-1}z}](x)-G\circ [ad_y, ad_{G^{-1}z}](x)$$
$$+[ad_y, G\circ ad_z\circ G^{-1}]+G\circ [ad_y, [ad_z,G^{-1}]](x)$$
$$+G \circ ad_{G^{-1}y}\circ [ad_z\circ G^{-1},G](x)+G \circ ad_{G^{-1}y}\circ G\circ ad_{G^{-1}z}(x))$$

$$=\frac 14(-G\circ [ad_{ad_y^*(z)}, G^{-1}](x)-G\circ [ad_{ad_z^*(y)},G^{-1}](x)$$
$$-G\circ ad_{G^{-1}[y,z]}(x)+G\circ ad_{G^{-1} ad_y^*(z)}(x)+G\circ ad_{G^{-1} ad_z^*(y)}(x)$$
$$+G\circ ad_{[y,z]}\circ G^{-1}(x))$$
$$-\frac 14(-[ad_y, G\circ ad_{G^{-1}z}](x)-G\circ [ad_y, ad_{G^{-1}z}](x)$$
$$+[ad_y, G\circ ad_z\circ G^{-1}]+G\circ [ad_y, [ad_z,G^{-1}]](x)$$
$$+G \circ ad_{G^{-1}y}\circ [ad_z\circ G^{-1},G](x)+G \circ ad_{G^{-1}y}\circ G\circ ad_{G^{-1}z}(x))$$

The two terms that individually are of order zero are
\begin{equation}\label{problem}
\frac14 (G\circ ad_{[y,z]}\circ G^{-1}-[ad_y, G\circ ad_z\circ G^{-1}])
\end{equation}
But this can be rewritten as
$$\frac14 ([G,ad_y]\circ ad_z\circ G^{-1}-G\circ ad_z\circ [ad_y,G^{-1}])$$

Thus as an operator on $x$, $R(x,y)z$ equals
\begin{equation}\label{curvop5}=\frac 14(-G\circ [ad_{ad_y^*(z)}, G^{-1}](x)-G\circ [ad_{ad_z^*(y)},G^{-1}](x)\end{equation}
$$-G\circ ad_{G^{-1}[y,z]}(x)+G\circ ad_{G^{-1} ad_y^*(z)}(x)+G\circ ad_{G^{-1} ad_z^*(y)}(x)$$
$$+[G,ad_y]\circ ad_z\circ G^{-1}-G\circ ad_z\circ [ad_y,G^{-1}])$$
$$-\frac 14(-[ad_y, G\circ ad_{G^{-1}z}](x)-G\circ [ad_y, ad_{G^{-1}z}](x)$$
$$  +G\circ [ad_y, [ad_z,G^{-1}]](x)$$
$$+G \circ ad_{G^{-1}y}\circ [ad_z\circ G^{-1},G](x)+G \circ ad_{G^{-1}y}\circ G\circ ad_{G^{-1}z}(x))$$

Now all of the individual terms have order $\le -1$. In the case of the circle it has been checked
that the order is $-1$ (see \cite{Driver}).

\subsubsection{The trace over $\mathfrak k$}\

Following the strategy of Freed, we now consider first taking
the trace over $\mathfrak k$ of the operator $x\to R(x,y)z$.

If we identify $W^s(X,\mathfrak k)$ with the tensor product $W^s(X)\otimes \mathfrak k$, it is clear
that $G$ and the trace of $\mathfrak k$ commute. Also for $\chi\in\mathfrak k$, $ad_{\chi}$ is nilpotent, and hence is traceless.
Consequently the trace over $\mathfrak k$ kills the terms in the first two lines of (\ref{curvop5}) and the second term
in the fourth line. Now, the first summand in \ref{problem} has trace zero.  Consequently $trace_{\mathfrak k}(x\to R(x,y)z)$ equals $trace_{\mathfrak k}$ of the following operator of $x$,
\begin{equation}\label{oper1}-\frac 14(-[ad_y, G\circ ad_{G^{-1}(z)}](x)+G\circ [ad_y, [ad_z,G^{-1}]](x)+[ad_y, G\circ ad_z\circ G^{-1}](x)\end{equation}
$$+G \circ ad_{G^{-1}(y)}\circ [ad_z\circ G^{-1},G](x)+G \circ ad_{G^{-1}(y)}\circ G\circ ad_{G^{-1}(z)}(x))$$

The following is a basic observation of Freed:

\begin{lemma}This (scalar) operator has psd order $\le \max(-2,-2s)$ \end{lemma}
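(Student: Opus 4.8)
The plan is to fix elementary tensors $y = Y\otimes b$ and $z = Z\otimes c$, with $Y,Z$ functions on $\Sigma$ and $b,c\in\mathfrak k$ (the general case following by bilinearity), and then to bound the psd order of each of the five summands of (\ref{oper1}) separately. The only symbol-calculus inputs needed are that $G=P^{-s}$ has order $-2s$ and $G^{-1}=P^s$ has order $2s$; that each $ad$ factor is an order-zero operator of the tensor form $M_Y\otimes ad_b$ (a multiplication operator on functions times a fixed endomorphism of $\mathfrak k$, where $M_f$ denotes multiplication by $f$); and that $[A,B]$ has order $\le\operatorname{ord}A+\operatorname{ord}B-1$, so that bracketing with an order-zero multiplication operator gains one order.

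First I would record the raw orders before tracing. Label the five summands of (\ref{oper1}) as $A,\dots,E$ in the order written. For $A=-[ad_y,G\circ ad_{G^{-1}(z)}]$ the inner operator has order $-2s$ and bracketing with the order-zero $ad_y$ gives $-2s-1$. For $B=G\circ[ad_y,[ad_z,G^{-1}]]$ one has $[ad_z,G^{-1}]$ of order $2s-1$, a further bracket with $ad_y$ giving $2s-2$, and the outer $G$ giving $-2$. For $C=[ad_y,G\circ ad_z\circ G^{-1}]$ the inner operator has order $0$, so $C$ has order $-1$. For $D=G\circ ad_{G^{-1}(y)}\circ[ad_z\circ G^{-1},G]$, using $[ad_z\circ G^{-1},G]=-[G,ad_z]\circ G^{-1}$ of order $-1$ together with $G\circ ad_{G^{-1}(y)}$ of order $-2s$, the product has order $-2s-1$. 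Finally $E=G\circ ad_{G^{-1}(y)}\circ G\circ ad_{G^{-1}(z)}$ has order $-4s$. Since $-2s-1\le-2s$, $-4s\le-2s$ and $-2\le-2$, the four terms $A,B,D,E$ are already bounded by $\max(-2,-2s)$; and because $\operatorname{tr}_{\mathfrak k}$ replaces each elementary tensor $\Phi\otimes T$ by the scalar multiple $(\operatorname{tr}T)\,\Phi$, it never raises order, so these four retain the bound after tracing.

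The decisive term is $C$, whose raw order $-1$ exceeds the target as soon as $s>1/2$; here the trace over $\mathfrak k$ supplies the missing order. Writing $G\circ ad_z\circ G^{-1}=(P^{-s}M_Z P^s)\otimes ad_c$ gives
$$[ad_y,G\circ ad_z\circ G^{-1}]=M_Y(P^{-s}M_Z P^s)\otimes ad_b\,ad_c-(P^{-s}M_Z P^s)M_Y\otimes ad_c\,ad_b.$$
Since $\operatorname{tr}(ad_b\,ad_c)=\operatorname{tr}(ad_c\,ad_b)=\kappa(b,c)$, the trace collapses this to $\kappa(b,c)\,[M_Y,\,P^{-s}M_Z P^s]$. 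Now $P^{-s}M_Z P^s=M_Z+[P^{-s},M_Z]P^s$ with the correction of order $-1$, and multiplication operators commute, so $[M_Y,P^{-s}M_Z P^s]=[M_Y,[P^{-s},M_Z]P^s]$ has order $\le-2$. Hence after tracing $C$ also satisfies order $\le-2\le\max(-2,-2s)$, and adding the five contributions proves the lemma.

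I expect the genuine content to be concentrated entirely in term $C$: the mechanism by which $\operatorname{tr}_{\mathfrak k}$ turns the Lie-bracket structure of the two $ad$ factors into an operator commutator of multiplication operators, which is one order smoother, is exactly Freed's observation and is the only place where the representation theory of $\mathfrak k$ (symmetry of $\kappa$ and tracelessness of $ad$) is used. The main obstacle is therefore not conceptual but bookkeeping: keeping the five noncommutative compositions and the implicit sums over elementary tensors straight, and resisting the temptation to bound $C$ by a naive order count that would only give $-1$.
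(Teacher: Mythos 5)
Your overall strategy---factor $y,z$ into elementary tensors, bound each of the five summands of (\ref{oper1}) separately, and use the trace over $\mathfrak k$ to gain an order where the naive count fails---is sound, and your treatment of term $C$ is exactly Freed's mechanism as it appears in the paper. But there is a genuine gap at term $B$. The rule $\mathrm{ord}\,[A,B]\le\mathrm{ord}\,A+\mathrm{ord}\,B-1$ holds for scalar pseudodifferential operators, or more generally when the principal symbols commute; it fails for the operators here, which have the form $S\otimes T$ with $S$ a scalar psd operator and $T\in\mathrm{End}(\mathfrak k)$. Concretely,
$$[ad_y,[ad_z,G^{-1}]]=[M_Y\otimes ad_b,\,[M_Z,P^{s}]\otimes ad_c]=[M_Y,[M_Z,P^{s}]]\otimes ad_b\, ad_c+[M_Z,P^{s}]M_Y\otimes ad_{[b,c]},$$
and the second summand has order $2s-1$ with a generically nonzero $\mathfrak k$-component, since $ad_b$ and $ad_c$ do not commute. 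So before tracing, $B=G\circ[ad_y,[ad_z,G^{-1}]]$ has order $-1$, not $-2$, and your assertion that ``$A,B,D,E$ are already bounded by $\max(-2,-2s)$'' is false for $B$ whenever $s>1/2$. (The same issue makes your count for $A$ inaccurate---$[ad_y,G\circ ad_{G^{-1}(z)}]$ has raw order $-2s$, not $-2s-1$---but there it is harmless, since $-2s\le\max(-2,-2s)$ anyway.)

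The gap is repaired by exactly the device you already use for $C$: after $\mathrm{tr}_{\mathfrak k}$ the $ad_{[b,c]}$ piece dies (a commutator is traceless), and what survives is $\kappa(b,c)\,P^{-s}[M_Y,[M_Z,P^{s}]]$, a genuine scalar double commutator of order $2s-2$, so the traced $B$ has order $\le-2$. Thus the ``genuine content'' is not concentrated entirely in $C$: both $B$ and $C$ need the trace to lose an order. This is also where your route diverges from the paper's. The paper expands everything into scalar terms immediately after tracing and then regroups the six order-zero-looking scalar pieces---which come from $B$ and $C$ together---into $G\circ[G^{-1},Y]\circ G\circ[Z,G^{-1}]+2G\circ[[G^{-1},Y],Z]$. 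Your term-by-term bookkeeping, once $B$ is fixed, yields the same conclusion and arguably isolates the mechanism more cleanly, producing one scalar commutator from each of $B$ and $C$.
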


\begin{proof}Suppose that $y=Y\otimes a$ and $z=Z\otimes b$. For brevity identify $Y$ with the corresponding multiplication operator
and so on. In the following expression for the operator (\ref{oper1}) acting on $x$, $G^{-1}(z)$ is viewed
as a multiplication operator (consequently we cannot cancel $G$ and $G^{-1}$ in the first line, and so on - this is a weakness of
our notation):
$$-\frac 14(-Y\circ G\circ G^{-1}(Z)\otimes ad_a\circ ad_b+G\circ G^{-1}(Z)\circ Y\otimes ad_b\circ ad_a
+Y\circ G\circ Z\circ G^{-1}\otimes ad_a\circ ad_b-G\circ Z\circ G^{-1}\circ Y\otimes ad_b\circ ad_a$$
$$
+G\circ Y\circ Z \circ G^{-1}\otimes ad_a\circ ad_b-G\circ Y\circ G^{-1}\circ Z\otimes ad_a\circ ad_b -G\circ Z\circ G^{-1} \circ Y\otimes ad_b\circ ad_a+ G\circ G^{-1}\circ Z\circ Y\otimes ad_b\circ ad_a     $$
$$+G \circ G^{-1}(Y)\circ Z\circ G^{-1}\circ G\otimes ad_a\circ ad_b-G \circ G^{-1}(Y)\circ G\circ Z\circ G^{-1}\otimes ad_a\circ ad_b$$
$$G \circ G^{-1}(Y)\circ G\circ G^{-1}(Z)\otimes ad_a\circ ad_b)$$
When we take the trace over $\mathfrak k$, this equals $-\frac14 \kappa(a,b)$ times the operator
$$-Y\circ G\circ G^{-1}(Z)+G\circ G^{-1}(Z)\circ Y
+Y\circ G\circ Z\circ G^{-1}-G\circ Z\circ G^{-1}\circ Y$$
$$
+G\circ Y\circ Z \circ G^{-1}-G\circ Y\circ G^{-1}\circ Z -G\circ Z\circ G^{-1}\circ Y+  Z\circ Y $$
$$+G \circ G^{-1}(Y)\circ Z-G \circ G^{-1}(Y)\circ G\circ Z\circ G^{-1}$$
$$+G \circ G^{-1}(Y)\circ G\circ G^{-1}(Z)$$
(Note the first two terms can be written as a commutator (and hence this commutator will have order $\le -2s-1$),
and the eighth and ninth terms can similarly be expressed as a commutator):
$$=[G\circ G^{-1}(Z), Y]
+Y\circ G\circ Z\circ G^{-1}-G\circ Z\circ G^{-1}\circ Y$$
$$
+G\circ Y\circ Z \circ G^{-1}-G\circ Y\circ G^{-1}\circ Z -G\circ Z\circ G^{-1}\circ Y+  Z\circ Y $$
$$+G \circ G^{-1}(Y)\circ [G^{-1},G\circ Z]$$
$$+G \circ G^{-1}(Y)\circ G\circ G^{-1}(Z)$$

To understand why there is reduction of order, consider the terms that just involve $Y,Z$,
$$Y\circ G\circ Z\circ G^{-1}-G\circ Z\circ G^{-1}\circ Y
+G\circ Y\circ Z \circ G^{-1}-G\circ Y\circ G^{-1}\circ Z -G\circ Z\circ G^{-1}\circ Y+  Z\circ Y $$
This equals
$$(G\circ Y+[Y,G])\circ Z\circ G^{-1}-G\circ Z\circ (Y\circ G^{-1}+[G^{-1},Y])$$
$$+G\circ Y\circ Z\circ G^{-1}-G\circ Y\circ (Z\circ G^{-1}+[G^{-1},Z])-G\circ (G^{-1}\circ Z+[Z,G^{-1}])\circ Y+Z\circ Y$$
$$=[Y,G]\circ Z\circ G^{-1}-G\circ Z\circ [G^{-1},Y])-G\circ Y\circ [G^{-1},Z]-G\circ [Z,G^{-1}])\circ Y$$
$$=G\circ [G^{-1},Y]\circ G\circ (G^{-1}\circ Z+[Z,G^{-1}])-G\circ Z\circ [G^{-1},Y])-G\circ Y\circ [G^{-1},Z]-G\circ [Z,G^{-1}]\circ Y$$

$$=G\circ [G^{-1},Y]\circ G\circ [Z,G^{-1}]+G\circ [[G^{-1},Y],Z]-G\circ [Y,[G^{-1},Z]]$$
$$=G\circ [G^{-1},Y]\circ G\circ [Z,G^{-1}]+2G\circ [[G^{-1},Y],Z]$$
This has order $\le -2$.

All of the other terms have order $\le -2s$. This proves Freed's observation.
\end{proof}

\section{Dixmier/Wodzicki Regularized Ricci Curvature when $\Sigma$ is a Surface}

We now suppose that $\Sigma$ is a surface with Riemannian metric. We continue to assume that
$P$ is a positive Laplace type operator and  $G^{-1}=P^s$.

We have to calculate ($-\frac14$ times) the Wodzicki residue of the operator
$$[G\circ G^{-1}(Z),Y]+G\circ [G^{-1},Y]\circ G\circ [Z,G^{-1}]+2G\circ [[G^{-1},Y],Z]
 $$
$$+G \circ G^{-1}(Y)\circ G\circ [G^{-1},Z]+G \circ G^{-1}(Y)\circ G\circ G^{-1}(Z)$$

When $s\ge 1$, all except the second and third terms have order $<-2$. Hence we must calculate ($-\frac14$ times) the Wodzicki residue of
\begin{equation}\label{eqn10}G\circ [G^{-1},Y]\circ G\circ [Z,G^{-1}]+2G\circ [[G^{-1},Y],Z]
 \end{equation}

For the first term in (\ref{eqn10}), we need to find the principal symbol of $[G^{-1},Y]$. This equals the Poisson bracket of the two functions on $T^*\Sigma$, $(g^{-1})^s:p\to p^{2s}$ and $Y$. Abstractly this equals
$$s(g^{-1})^{s-1}\pi(d(g^{-1})\wedge dY)$$ where $\pi$ denotes the Poisson tensor.
In coordinates $(x,p)$ (using summation convention)
$$d(g^{-1})=(dg^{ij})p_ip_j+2g^{ij}p_idp_j \text{ and } dY=\frac{\partial Y}{\partial x^k}dx^k.$$
Using $\{p_i,x_j\}=\delta_{ij}$, it follows that the principal symbol of $[G^{-1},Y]$ equals
$$-2s(g^{-1})^{s-1}g^{-1}(p,dY)$$
Consequently the principal symbol of the first term in (\ref{eqn10}) equals
\begin{equation}\label{eqn11}-4s^2g^{-1}(p,dY)g^{-1}(p,dZ)\end{equation}

For the second term in (\ref{eqn10}) we need to compute (twice the) Poisson bracket
\begin{equation}\label{eqn2}{2s(g^{-1})^{s-1}\pi(g^{-1}(p,dY),Z})=\pi(d(2s(g^{-1})^{s-1}g^{-1}(p,dY)),dZ)\end{equation}
Now
$$d((g^{-1})^{s-1}g^{-1}(p,dY))=(s-1)(g^{-1})^{s-2}\left((dg^{ij})p_ip_j+2g^{ij}p_idp_j\right)g^{-1}(p,dY)$$
$$
+(g^{-1})^{s-1}(dg^{ij})p_i\frac{\partial Y}{\partial x^j}+g^{ij}dp_i\frac{\partial Y}{\partial x^j}+ g^{ij}p_id(\frac{\partial Y}{\partial x^j}))$$
When we compute the Poisson bracket, we only need the $dp_i$ terms. It follows that (\ref{eqn2}) equals
\begin{equation}\label{eqn12}4s(s-1)(g^{-1})^{s-2}g^{-1}(p,dY)g^{-1}(p,dZ)+2sg^{-1}(dY,dZ)\end{equation}

These two calculations imply that the leading symbol of (\ref{10}) equals
\begin{equation}\label{leadingsymbol}-4s^2|p|^{2s-4}g^{-1}(p,dY)g^{-1}(p,dZ)+8s(s-1)(g^{-1})^{s-2}g^{-1}(p,dY)g^{-1}(p,dZ)+
4s|p^{2s}|g^{-1}(dY,dZ)\end{equation}

To calculate the Wodzicki residue, we have to multiply this homogeneous function on $T^*\Sigma$ by the Riemannian volume form for
the sphere bundle $S(T^*\Sigma)$, then integrate. Consider the first symbol in (\ref{leadingsymbol}. To integrate this over the sphere bundle (against the volume form), we first
integrate over the unit circle in the cotangent space at a point on the surface, then we integrate over the surface. Fix a point on the surface, say $q_0$. Let $S$ denote the unit circle in the cotangent space at $q_0$, and let $\theta$ denote an arclength parameter (relative to the metric defined by $g^{-1}$). Because $g^{-1}$ (viewed as a function on the cotangent bundle) equals unity on this circle, we obtain $-4s^2$ times
$$\int_S \langle dY,\left(\begin{matrix}cos(\theta)\\sin(\theta)\end{matrix}\right)\rangle\langle dZ,\left(\begin{matrix}cos(\theta)\\sin(\theta)\end{matrix}\right)\rangle d\theta$$
$$=\int_S \left(((dY)_1 cos(\theta)+(dY)_2 sin(\theta))((dZ)_1 cos(\theta)+(dZ)_2 sin(\theta))\right)d\theta$$
(where $(dY)_1$ denotes the first component of $dY$ in the orthonormal system for the metric, and so on)
$$=\int_S \left(((dY)_1(dZ)_1 cos(\theta)^2+(dY)_1 (dZ)_2 cos(\theta)sin(\theta))+(dY)_2(dZ)_1 sin(\theta)cos(\theta)+(dY)_2(dZ)_2 sin(\theta)^2)\right)d\theta$$
$$=\pi \langle dY,dZ\rangle\vert_{q_0}$$
where we have used $\int_S cos(\theta)sin(\theta)d\theta=0$ and $\int_S cos(\theta)^2d\theta=\int_S sin(\theta)^2d\theta=\pi$.
When we integrate this over the surface, using the area form, we obtain
$$\pi\int_{\Sigma}dY\wedge *dZ$$ The second term is similar.
For the third term in (\ref{leadingsymbol}), the integrand does not depend on the circle coordinate,
so we obtain $4s$ times
$$2\pi\int_{\Sigma}dY\wedge * dZ$$

It follows that
$$Ric(Y\otimes b,Z\otimes c)=-\kappa(b,c)\pi s^2 \int_{\Sigma} dY\wedge * dZ$$
We summarize this as follows.

\begin{proposition} Suppose that $\Sigma$ is a closed Riemannian surface and $s>1$. Using the two step procedure of first calculating the trace over $\mathfrak k$ and then using the Dixmier trace to define what we mean by the Ricci curvature for $W^s(\Sigma,K)$,
$$Ric(y,z)=-\pi s^2 \int_{\Sigma} \kappa(dy\wedge * dz)$$
\end{proposition}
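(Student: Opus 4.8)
The plan is to carry out the two-step regularization exactly as prescribed: first form the trace over $\mathfrak k$ of the operator $x\mapsto R(x,y)z$, obtaining a scalar pseudodifferential operator on $W^s(\Sigma)$, and then take its Dixmier trace, which on the two-dimensional $\Sigma$ is (after fixing a normalization) the Wodzicki residue and therefore detects only the component of symbolic order exactly $-2$. I would begin from the expression for $R(x,y)z$ as an operator on $x$ recorded in Lemma \ref{lemma1}, specialized to $G^{-1}=P^s$, write $y=Y\otimes a$ and $z=Z\otimes b$, and push the trace over $\mathfrak k$ through. Because each $ad_\chi$ is nilpotent and hence traceless, while $\mathrm{tr}_{\mathfrak k}(ad_a\circ ad_b)=\kappa(a,b)$, every term in which a single $ad$ survives is annihilated, and the remainder factors as $-\tfrac14\kappa(a,b)$ times a scalar operator assembled from $G$, $G^{-1}$, and the multiplication operators $Y,Z$. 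Bilinearity in $y$ and $z$ then reduces the general statement to these factored elements.

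Next I would do the order bookkeeping. Freed's lemma gives that the resulting scalar operator has order $\le\max(-2,-2s)$, and since $s>1$ the terms of order $-2s$ lie strictly below $-2$ and are invisible to the residue; likewise the commutator terms acquire an extra order reduction. Only the two genuinely order-$(-2)$ contributions survive, namely the two summands of \eqref{eqn10}, $G\circ[G^{-1},Y]\circ G\circ[Z,G^{-1}]$ and $2G\circ[[G^{-1},Y],Z]$. To find their leading symbols I would use that the principal symbol of $P^s$ is $|p|^{2s}=(g^{-1}(p,p))^s$ and compute commutators with the multiplication operators via Poisson brackets: the principal symbol of $[G^{-1},Y]$ is $\{|p|^{2s},Y\}=-2s|p|^{2s-2}g^{-1}(p,dY)$. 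Composing symbols to leading order --- being careful, as the notation warns, not to cancel $G$ against $G^{-1}$ across an intervening multiplication operator --- yields for the first summand a degree-$(-2)$ symbol proportional to $g^{-1}(p,dY)g^{-1}(p,dZ)$, while a second Poisson bracket for the double commutator produces both a further $g^{-1}(p,dY)g^{-1}(p,dZ)$ piece and a term proportional to $g^{-1}(dY,dZ)$, as recorded in \eqref{leadingsymbol}.

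The final step is to integrate this degree-$(-2)$ leading symbol over the unit cosphere bundle $S(T^*\Sigma)$ and then over $\Sigma$. Fixing a point, parametrizing the unit circle in the cotangent space by arclength $\theta$, and expanding $g^{-1}(p,dY)g^{-1}(p,dZ)$ in the orthonormal components of $dY,dZ$, the elementary integrals $\int_0^{2\pi}\cos^2\theta\,d\theta=\int_0^{2\pi}\sin^2\theta\,d\theta=\pi$ and $\int_0^{2\pi}\cos\theta\sin\theta\,d\theta=0$ turn each directional product into $\pi\langle dY,dZ\rangle$, while the direction-independent $g^{-1}(dY,dZ)$ term contributes $2\pi\langle dY,dZ\rangle$. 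With the angular integration supplying a factor $\pi$ for each directional product and a factor $2\pi$ for the direction-independent term, the combination of the three coefficients $-4s^2$, $8s(s-1)$, and $4s$ collapses to $(-4s^2+8s(s-1))\pi+4s\cdot 2\pi=4\pi s^2$; integrating over the surface converts $\langle dY,dZ\rangle\,dV$ into $dY\wedge *dZ$, and restoring the prefactor $-\tfrac14\kappa(a,b)$ gives exactly $-\pi s^2\kappa(a,b)\int_\Sigma dY\wedge *dZ$.

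I expect the main obstacle to be the symbol calculus rather than the final integration. The delicate points are (i) checking that no order-$(-2)$ contribution is overlooked among the many terms that survive the trace over $\mathfrak k$, and that the discarded terms really do fall strictly below $-2$ for $s>1$; and (ii) computing the principal symbols of the nested commutators correctly while keeping track of the non-cancellation of $G$ and $G^{-1}$ around the multiplication operators. The clean coefficient $s^2$ --- rather than a residual $s(s-1)$ --- depends on correctly combining the two $g^{-1}(p,dY)g^{-1}(p,dZ)$ contributions, and the final numerical factor $\pi$ hinges on fixing the Wodzicki-to-Dixmier normalization so that the $4$ from the angular integration cancels the $\tfrac14$ from the curvature formula.
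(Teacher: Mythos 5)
Your proposal is correct and follows essentially the same route as the paper: the same two-step reduction via the trace over $\mathfrak k$ to the two order-$(-2)$ terms of (\ref{eqn10}), the same Poisson-bracket computation of the principal symbols of $[G^{-1},Y]$ and the nested commutator, and the same cosphere-bundle integration producing $(-4s^2+8s(s-1))\pi+8s\pi=4\pi s^2$ before restoring the prefactor $-\tfrac14\kappa(a,b)$. Your flagged concerns (completeness of the order bookkeeping and the Wodzicki-to-Dixmier normalization) are exactly the points the paper also treats, the latter implicitly by taking the residue to be the cosphere integral.
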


Note that because $\kappa$ is negative, Ricci curvature is positive, and becomes more positive as $s$ increases.
Note also that the result is independent of the specific form of $P$, beyond the fact that it is a Laplace type operator.

\subsection{The Critical Case}\

We continue to assume that $\Sigma$ is a closed Riemannian surface, and pick a basepoint $z_0\in\Sigma$.
If we restrict to reasonably nice maps, there is a bijection
$$\Omega(\Sigma;\mathfrak k) \leftrightarrow Map(\Sigma,\mathfrak k)/\mathfrak k $$
$$x \leftrightarrow x \text{ modulo } \mathfrak k $$
The first space is a Lie algebra, and (momentarily assuming that $\mathfrak k$ is simple) there is an essentially unique $PSL(2,\mathbb C) \times Ad(K)$ invariant inner product on the latter space,
$$\langle x,x\rangle= -\int_{\Sigma} \kappa ( dx\wedge *dx)$$
The essential uniqueness of the inner product on $Map(\Sigma,\mathfrak k)/\mathfrak k$ is a consequence of Schur's lemma and the fact that $PSL(2,\mathbb C)\times Ad(K)$ acts unitarily and irreducibly on an appropriate subspace (this action of $PSL(2,\mathbb C)$ is a member of the principal series for this group).
Now it is not the case that $W^1(\Sigma,K)$ is a Lie group, and consequently it is not possible to smoothly translate the (when $\mathfrak k$ is simple, essentially unique) $PSL(2,\mathbb C)\times Ad(K)$ invariant inner product around the quotient $W^1(\Sigma,K)/K$. However,
as in Freed, we can consider the restriction to smoother loops ($W^s(\Sigma,K)/K$, for any $s>1$; in this context the metric is not complete), and we can use the two step procedure of first calculating the trace over $\mathfrak k$ and then using the Dixmier trace, to define what we mean by the Ricci curvature. With this understood, we obtain the following

\begin{corollary} $W^1(\Sigma,K)/K$ is an Einstein manifold with positive constant, $\pi$.\end{corollary}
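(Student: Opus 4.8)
The plan is to derive the Einstein relation directly, by matching the Ricci formula of the preceding Proposition against the conformally invariant metric at the critical exponent $s=1$. Assuming $\mathfrak{k}$ simple, I would first fix the normalization $\langle\langle\cdot,\cdot\rangle\rangle=-\kappa$. With this choice the metric of Example I with $P=\Delta$, $m_0=0$ and $s=1$, descended to the quotient $W^1(\Sigma,K)/K$ (the kernel of $\Delta$ being the constants, which is why one passes to the quotient by $K$), becomes after integration by parts (recall $\Delta=d^*d$)
$$\langle y,z\rangle=\int_\Sigma\langle\langle\Delta y,z\rangle\rangle\,dV=\int_\Sigma\langle\langle dy\wedge *dz\rangle\rangle=-\int_\Sigma\kappa(dy\wedge *dz),$$
which is precisely the essentially unique conformally invariant inner product identified in the critical-case discussion.

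Next I would read off the regularized Ricci form from the Proposition, evaluated at $s=1$, namely $Ric(y,z)=-\pi\int_\Sigma\kappa(dy\wedge *dz)$. Comparing the two displays gives at once $Ric(y,z)=\pi\langle y,z\rangle$, the Einstein condition with positive constant $\pi$. I would stress that this proportionality is a feature of the critical exponent alone: for $s>1$ the $W^s$-metric carries the factor $P^s$, whereas the regularized Ricci retains the $s=1$-type quadratic form $\int_\Sigma\kappa(dy\wedge *dz)$ scaled only by $s^2$, so no Einstein relation holds off criticality.

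The one substantive point --- and the main obstacle --- is the right to invoke the Proposition at the critical exponent, since $W^1(\Sigma,K)$ is not a Lie group and the naive curvature computation does not apply there. I would handle this as in Freed and as advertised in the critical-case discussion: the regularized Ricci is by definition produced by the two-step procedure (first the fibrewise trace over $\mathfrak{k}$, then the Dixmier/Wodzicki trace), and the leading-symbol calculation that yielded $-\pi s^2\int_\Sigma\kappa(dy\wedge *dz)$ depends on $s$ only through the explicit continuous prefactor $s^2$, so its value at $s=1$ is unambiguous (equivalently, it is the limit of the rigorous $s>1$ values as $s\downarrow 1$). As a consistency check, for $\Sigma$ conformally the round sphere I would note that $PSL(2,\mathbb{C})\times Ad(K)$ acts isometrically and irreducibly on the relevant completion, so by Schur's lemma any invariant symmetric bilinear form --- in particular the regularized Ricci --- is a scalar multiple of the invariant metric; the Wodzicki computation then merely identifies that scalar as $+\pi$.
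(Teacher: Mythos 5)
Your proposal is correct and follows essentially the same route as the paper: the corollary is obtained by evaluating the regularized Ricci formula $Ric(y,z)=-\pi s^{2}\int_{\Sigma}\kappa(dy\wedge *dz)$ at the critical exponent $s=1$ and observing that it is then $\pi$ times the conformally invariant metric $-\int_{\Sigma}\kappa(dy\wedge *dz)$, with the non-Lie-group issue handled exactly as the paper does, by computing on $W^{s}$ for $s>1$ via the two-step trace and passing to $s=1$. Your explicit normalization $\langle\langle\cdot,\cdot\rangle\rangle=-\kappa$ and the Schur's-lemma consistency check are welcome clarifications the paper leaves implicit, but they do not change the argument.
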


\end{document}